\documentclass[12pt]{amsart}
\usepackage{a4wide}
\usepackage[hang,flushmargin]{footmisc}

\usepackage{amsmath,amssymb,amsfonts}
\usepackage[foot]{amsaddr}
\usepackage{graphicx}
\usepackage{hyperref}
\usepackage{orcidlink}

\usepackage{mathtools}
\mathtoolsset{showonlyrefs}

\numberwithin{equation}{section}
\newtheorem{theorem}{Theorem}[section]

\newtheorem{proposition}[theorem]{Proposition}

\newtheorem{corollary}[theorem]{Corollary}
\newtheorem{conjecture}[theorem]{Conjecture}

\newcommand{\R}{{\mathbb R}}
\newcommand{\Q}{{\mathbb Q}}
\newcommand{\Z}{{\mathbb Z}}
\newcommand{\N}{{\mathbb N}}
\newcommand{\C}{\mathbb{C}}
\newcommand{\G}{\mathcal{G}}
\newcommand{\F}{\mathcal{F}}
\newcommand{\vol}{{\rm vol}}

\providecommand{\norm}[1]{\lVert#1\rVert}

\author[M.~Faulhuber]{Markus Faulhuber \orcidlink{0000-0002-7576-5724}}
\address{Faculty of Mathematics, University of Vienna \newline Oskar-Morgenstern-Platz 1, 1090 Vienna, Austria}
\email{markus.faulhuber@univie.ac.at}

\begin{document}
\title[Gabor systems with Hermite functions]{Gabor systems with Hermite functions of order \texorpdfstring{$n$}{n} and oversampling greater than \texorpdfstring{$n+1$}{n+1} which are not frames}

\thanks{This research was funded in whole or in part by the Austrian Science Fund (FWF) [\href{https://doi.org/10.55776/P33217}{10.55776/P33217}]. The author thanks Karlheinz Gröchenig for feedback on the initial draft of the paper. The author gratefully acknowledges the comments from the anonymous referees.}

\keywords{Gabor frame, Hermite function, periodic configuration}

\subjclass{42C15; 33C45}

\begin{abstract}
    We show that a sufficient density condition for Gabor systems with Hermite functions over lattices is not sufficient in general. This follows from a result on how zeros of the Zak transform determine the frame property of integer over-sampled Gabor systems.
\end{abstract}

\maketitle

\section{Introduction}
Gabor systems with Hermite functions have some quite satisfying, but just as many mysterious properties. A fundamental result due to Lyubarskii \cite{Lyu92} and Seip and Wallstén \cite{Sei92_1},~\cite{SeiWal92} is that a Gabor system with a Gaussian window is a frame for $L^2(\R)$ if and only if the sampling rate exceeds 1. Indexing the Gaussian as the $0$-th Hermite function, we have the quite satisfying condition, due to Gröchenig and Lyubarskii \cite{GroechenigLyubarskii_Hermite_2007}, \cite{GroechenigLyubarskii_SuperHermite_2009}, that a Gabor system with the $n$-th Hermite function over a lattice $\Lambda \subset \R^2$ is a frame once the density of the lattice is larger than $n+1$, i.e., $\vol(\R^2/\Lambda)<1/(n+1)$. A main difference to the results in \cite{Lyu92}, \cite{Sei92_1}, \cite{SeiWal92} is that the results in \cite{GroechenigLyubarskii_Hermite_2007} and \cite{GroechenigLyubarskii_SuperHermite_2009} rely on the lattice structure.


In this work, we answer the following question, which has been around for about 20~years, since the first announcement of their result by Gröchenig and Lyubarskii:\newline
\textit{Can the sufficiency condition be extended beyond the lattice setting?}
The answer is NO!

This paper provides explicit examples of Gabor systems with the first and third Hermite functions and oversampling rates of 3 and 5, respectively, which are not frames. These are derived from the location of known zeros of their Zak transforms (the existence of further zeros is open). Additionally, we show the existence of a Gabor system with the second Hermite function and oversampling rate 5, and one with the third Hermite function and oversampling rate 7, which are not frames. Our approach is not entirely new: The Zak transform and its zeros have often been used to construct lattices which, together with a (class of) Hermite window(s), do not yield a frame, see, e.g., \cite[Prop.~3.5]{GroechenigLyubarskii_Hermite_2007}, \cite{HorLemVid25}, \cite{Lem16}.
The sets we construct still exhibit periodicities and symmetries, but are not lattices. In \cite{GroRomSto20}, results for Gabor systems with semi-regular lattices $\Gamma \times \Z \subset \R^2$ have been derived. Our examples do not generally show this tensor structure, but we give non-frame examples for the third Hermite function with semi-regular lattices of density 2, 3, and 4.

A related problem concerns the frame set structure for Gabor systems with the $n$-th Hermite functions and lattices of density $1 < D(\Lambda) \leq n+1$ \cite{Gro14}. So far, restrictions for odd Hermite functions and rectangular lattices are known \cite{LyubarskiiNes_Rational_2013}, which have been extended to general lattices \cite{Faulhuber_LyuNes_2019}: The Gabor system with an odd Hermite function is not a frame if the lattice density satisfies $D(\Lambda) = \frac{N+1}{N}$, $N$ a positive integer. For even Hermite windows, there were no further restrictions than the density condition $D(\Lambda) > 1$ until Lemvig provided the first exceptions~\cite{Lem16}. More examples of Gabor systems with Hermite functions and rectangular lattices of density $1 < D(\Lambda) \leq n+1$, followed \cite{HorLemVid25}. For $h_1$, it could still hold that the restriction $D(\Lambda)>\frac{N+1}{N}$, provided in \cite{LyubarskiiNes_Rational_2013}, is the only exception \cite{Gro14}. The results in \cite{FauShaZlo25} broke a pattern occurring for Hermite functions of order up to 3, challenging a question posed by Lemvig~\cite{Lem16}: \textit{Is the lattice density condition of Gröchenig and Lyubarskii sharp}?
This question remains open. If true, it may be answered by knowing the number and location of the zeros of the Zak transforms of generalized Hermite functions.

\medskip

The author thanks the first referee for providing the following remarks and consequences: Our answer may be surprising. At least from a complex analysis viewpoint hints were making it reasonable to believe that a lower Beurling density $D^-(\Gamma) > n+1$ could be sufficient for separated points sets $\Gamma$ to form a Gabor frame with the $n$-th Hermite function (cf.\ \cite{SeiBre93} for the case of multiplicity sampling with Hermite functions). Denoting the upper Beurling density of $\Gamma$ by $D^+(\Gamma)$, it can be shown that a Gabor system with Hermite window is a Riesz basis if $D^+(\Gamma) < 1/(n+1)$. Using the Weierstrass sigma function, this result was established for lattices in \cite[Thm.~1]{AbrGro12}. However, as in \cite{SeiWal92}, the result relies on the growth of the sigma function and can be generalized using a non-uniform version. In the case of a lattice $\Lambda \subset \R^2$, we have $D^+(\Lambda) = D^- (\Lambda) = D(\Lambda)$, which is called the density of the lattice. Duality results (Ron-Shen \cite{RonShe97}, Wexler-Raz \cite{WexRaz90}) yield equivalent conditions between the Riesz basis property and the frame property: A Gabor system is a frame if and only if the dual system with lattice $\Lambda^\circ = D(\Lambda)^{1/2} \Lambda$ is a Riesz basis. Thus, we not only show that the sufficient density result cannot be extended beyond the lattice setting, but our results also prohibit non-uniform versions of the Wexler-Raz or Ron-Shen duality principles. Otherwise, the density result could be deduced from the Riesz basis property.

\medskip

\textbf{Outline of the paper.} We clarify the notation in \S~\ref{sec:notation}: In \S~\ref{sec:Hermite} we introduce our normalization for Hermite functions, which is the same as in the book of Folland~\cite{Fol89}. Time-frequency shifts and multi-window Gabor systems are introduced in \S~\ref{sec:Gabor}, following the book of Gröchenig~\cite{Gro01}. Also, the notion of a periodic structure (the union of finitely many shifted copies of one lattice) is discussed there. We then present the (parameter-free) Zak transform in \S~\ref{sec:Zak}, which maps functions from $\R$ to quasi-periodic functions with period lattice $\Z^2$. This is again covered by the book of Gröchenig~\cite{Gro01}. The reader familiar with the concepts and notation may directly jump to the results in \S~\ref{sec:results}. Finally, in \S~\ref{sec:questions} we pose some open questions on the topic which should be understood as lines of research to advance the status of research. The author thanks the referees for the encouragement to include a discussion on open problems and Hermite windows of low order.

\section{Notation}\label{sec:notation}
In most parts of this article, we will follow the notation from the book of Gröchenig~\cite{Gro01}. The Hilbert space of square-integrable functions on the real line is denoted by $L^2(\R)$. For technical reasons, it will sometimes be necessary to restrict ourselves to a suitable dense subspace, in which case we will also write that a function is suitable. The statement is then valid for the Hilbert space by the density of the suitable function space, considering that some results will only hold almost everywhere and not pointwise.

We say that a function is suitable if it is in the modulation space $M^1(\R)$, also known as the Feichtinger algebra $S_0(\R)$ \cite{Fei83}. In some situations, it would suffice to pick a function from the (slightly larger) Wiener space of continuous functions $W_0(\R)$ with Fourier transform also contained in $W_0(\R)$, but to avoid confusion, we only consider $M^1(\R)$. As our results will be for Hermite functions, which are contained in the Schwartz space $\mathcal{S}(\R)$, the reader not familiar with $M^1(\R)$ may consider functions in $\mathcal{S}(\R) \subset M^1(\R)$ as suitable.

The Fourier transform of a suitable function on $\R$ is given by
\begin{equation}
    \F f(y) = \widehat{f}(y) = \int_{\R} f(t) e^{-2 \pi i y t} \, dt.
\end{equation}
Note that $\F$ extends to a unitary operator on $L^2(\R)$, i.e.,
\begin{equation}
    \langle f, g \rangle = \langle \F f, \F g \rangle,
    \quad \text{ where } \quad
    \langle f, g \rangle = \int_{\R} f(t) \overline{g(t)} \, dt.
\end{equation}
Moreover, we will use the unitary dilation operator $\mathcal{D}_a$, $a > 0$, which acts by the rule
\begin{equation}\label{eq:dilation}
    \mathcal{D}_a f(t) = \frac{1}{\sqrt{a}} f \left( \frac{t}{a} \right).
\end{equation}

\subsection{Hermite functions}\label{sec:Hermite}
We consider Hermite functions as defined in \cite{Fol89}. The $n$-th order Hermite function is the $n$-th Hermite polynomial $H_n$ (subject to our normalization) multiplied by a Gaussian function $\phi(t) = e^{-\pi t^2}$. More precisely, we have
\begin{equation}
    h_n(t) = (-1)^n C_n e^{\pi t^2} \dfrac{d^n}{dt^n}\left( e^{-2 \pi t^2} \right) = H_n(t) \phi(t),
\end{equation}
where $C_n$ is a normalizing constant, which is explicitly given by $C_n = 2^{1/4}/\sqrt{n! (2\pi)^n 2^n}$. In particular, for $n=0$ we obtain a normalized Gaussian function
\begin{equation}
    h_0(t) = C_0 \, \phi(t) = 2^{1/4} e^{-\pi t^2}.
\end{equation}
Moreover, the Hermite functions are eigenfunctions of the Fourier transform with
\begin{equation}
    \F h_n = (-i)^n h_n, \quad n \in \N_0.
\end{equation}

\subsection{Gabor systems}\label{sec:Gabor}
A Gabor system in $L^2(\R)$ is a function system of the type
\begin{equation}
    \G(g,\Gamma) = \{\pi(\gamma) g \mid \gamma \in \Gamma \subset \R^2\},
\end{equation}
where $\pi(\gamma)$ is a time-frequency shift. This is the composition of a translation (time-shift) and a modulation (frequency-shift), i.e.,
\begin{equation}
    \pi(z) = M_\omega T_x, \ z=(x,\omega)
    \quad
    T_x f(t) = f(t-x),
    \quad
    M_\omega f(t) = e^{2 \pi i \omega t} f(t).
\end{equation}
Time-frequency shifts are not closed under composition due to the commutation relation
\begin{equation}\label{eq:CR_TF-shifts}
    M_\omega T_x = e^{2 \pi i \omega x} T_x M_\omega.
\end{equation}
However, by adding a phase factor $c \in \C$, $|c|=1$, they become a non-commutative group:
\begin{equation}
    e^{2 \pi i \tau_1} \, \pi(z_1) \, e^{2 \pi i \tau_2} \, \pi(z_2) = e^{2 \pi i (\tau_1+\tau_2-x_1\omega_2)} \, \pi(z_1+z_2), \quad z_1=(x_1,\omega_1), \, z_2=(x_2,\omega_2).
\end{equation}
For more details, refer to \cite[Chap.~1.3]{Fol89}, \cite[Chap.~9.1]{Gro01}. The important part for us here is that the composition of two time-frequency shifts is (up to a phase factor) again a time-frequency shift by the sum of the coordinates.

A Gabor system is a frame for $L^2(\R)$ if and only if there exist constants $0 < A \leq B < \infty$, called frame bounds, such that the following frame inequality is satisfied:
\begin{equation}\label{eq:frame}
    A \norm{f}_{L^2}^2 \leq \sum_{\gamma \in \Gamma} | \langle f, \pi(\gamma) g \rangle|^2 \leq B \norm{f}_{L^2}^2, \quad \forall f \in L^2(\R).
\end{equation}
The optimal frame bounds are the spectral bounds of the associated Gabor frame operator
\begin{equation}
    S_{\G(g,\Gamma)} f = \sum_{\gamma \in \Gamma} \langle f, \pi(\gamma) g \rangle \, \pi(\gamma) g.
\end{equation}
We remark that we always assume $\Gamma$ to be separated (uniformly discrete), i.e.,
\begin{equation}
    \inf_{\substack{\gamma_1, \gamma_2 \in \Gamma \\ \gamma_1 \neq \gamma_2}}(|\gamma_1-\gamma_2|) \geq \delta > 0.
\end{equation}
In this case, for $g \in M^1(\R)$, the upper frame bound is always finite, and we only need to care about the strict positivity of the lower frame bound. In the sequel, we will consider periodic configurations. The points then, indeed, have a minimal distance from each other.

A multi-window Gabor system is of the form
\begin{equation}
    \mathfrak{G}(\{g_m\}_{m=1}^M,\Gamma) = \bigcup_{m=1}^M \G(g_m,\Gamma)
\end{equation}
and the associated multi-window frame operator is given by
\begin{equation}
    S_{\mathfrak{G}(\{g_m\}_{m=1}^M,\Gamma)} = \sum_{m=1}^M S_{\G(g_m,\Gamma)} = \sum_{m=1}^M \sum_{\gamma \in \Gamma} \langle f, \pi(\gamma) g_m \rangle \, \pi(\gamma) g_m.
\end{equation}
The windows $g_m$ may be completely different, but we are interested in the case where they are shifted copies $\pi(z_m) g$ of a single function $g$. A direct computation shows that the multi-window frame operator is also the associated frame operator of a Gabor system over a periodic structure $\Gamma(\Lambda,\{z_m\}_{m=1}^M)$, $\Lambda$ a lattice, with window $g$. The periodic structure is
\begin{equation}
    \Gamma(\Lambda,\{z_m\}_{m=1}^M) = \bigcup \limits_{m=1}^M (\Lambda + z_m)
\end{equation}
To simplify notation, we set $\Gamma_M(\Lambda) = \Gamma(\Lambda,\{z_m\}_{m=1}^M)$, keeping in mind that we still need to choose the set $\{z_1, \ldots, z_M\}$. While $\Gamma_M(\Lambda)$ is still $\Lambda$-periodic, it is not necessarily a lattice. The density, or (over-)sampling rate, of a periodic structure is given by
\begin{equation}
    D(\Gamma_M(\Lambda)) = M \cdot D(\Lambda) = \frac{M}{\vol(\R^2/\Lambda)}.
\end{equation}
Assume $g$ is suitable (so the following series converges unconditionally), then
\begin{align}\label{eq:frame_shifts}
    S_{\mathfrak{G}(\{\pi(z_m) g\}_{m=1}^M,\Lambda)} f
    & = \sum_{m=1}^M \sum_{\lambda \in \Lambda} \langle f, \pi(\lambda) \pi(z_m) g \rangle \, \pi(\lambda) \pi(z_m) g
    \\
    & = \sum_{m=1}^M \sum_{\lambda \in \Lambda} \langle f, \pi(\lambda+z_m) g \rangle \, \pi(\lambda+z_m) g
    = S_{\G(g, \Gamma_M)} f.
\end{align}
The phase factor, which appears due to \eqref{eq:CR_TF-shifts}, also appears as a complex conjugate in the above calculation and cancels. The last detail we collect is the unitary equivalence of the Gabor systems $\G(g,\Gamma)$ and $\G(\mathcal{D}_a g, D_a \Gamma)$ (see \cite{Fau25_SampTA}, \cite{Gos15}), where
\begin{equation}
    D_a = \begin{pmatrix}
        a & 0 \\
        0 & \frac{1}{a}
    \end{pmatrix}, \quad a > 0.
\end{equation}
First, for $z \in \R^2$, we note that $\mathcal{D}_a \,\pi(z) \, \mathcal{D}_a^{-1} = \pi(D_a z)$, which can be checked by a simple calculation with a suitable function. From this, we get
\begin{align}
    S_{\G(\mathcal{D}_a g, D_a \Gamma)} f
    & = \sum_{\gamma \in D_a \Gamma} \langle f , \pi(\gamma) \mathcal{D}_a g \rangle \, \pi(\gamma) \mathcal{D}_a g
    = \sum_{\gamma \in \Gamma}  \langle f , \pi(D_a \gamma) \mathcal{D}_a g \rangle \, \pi(D_a \gamma) \mathcal{D}_a g\\
    & = \sum_{\gamma \in \Gamma}  \langle f , \mathcal{D}_a \pi(\gamma) \mathcal{D}_a^{-1} \mathcal{D}_a g \rangle \, \mathcal{D}_a \pi(\gamma) \mathcal{D}_a ^{-1} \mathcal{D}_a g
    = \mathcal{D}_a \sum_{\gamma \in \Gamma}  \langle \mathcal{D}_a^{-1} f , \pi(\gamma) g \rangle \, \pi(\gamma) g\\
    & = \mathcal{D}_a S_{\G(g,\Gamma)} \mathcal{D}_a^{-1} f.
\end{align}
In the above computations, we used the unconditional convergence of the series (because $g$ is suitable, so we have a Bessel system, i.e., a finite upper frame bound) and the fact that $\mathcal{D}_a$ is unitary. In particular, the two Gabor systems have the same optimal frame bounds.

\subsection{The Zak transform}\label{sec:Zak}
The Zak transform has become a standard tool in time-frequency analysis to study the frame property of integer over-sampled Gabor systems. For a suitable function $f$, its Zak transform is given by 
\begin{equation}
    \mathcal{Z} f(x,\omega) = \sum_{k \in \Z} f(k-x) e^{2 \pi i k \omega} = \sum_{k \in \Z} M_\omega T_x f(k).
\end{equation}
In shorter notation, setting $z=(x,\omega)$ we can also write this as
\begin{equation}
    \mathcal{Z}f(z) = \sum_{k \in \Z} \pi(z) f(k).
\end{equation}
We note the following basic properties, which are easily verified.
\begin{equation}\label{eq:Zak_periodic}
    \mathcal{Z}f(x+1,\omega) = e^{2 \pi i \omega} \mathcal{Z}f(x,\omega)
    \quad \text{ and } \quad
    \mathcal{Z}f(x,\omega+1) = \mathcal{Z}f(x,\omega)
\end{equation}
Thus, $\mathcal{Z}f(x,\omega)$ is quasi-periodic with respect to the integer lattice $\Z^2$ and completely determined by the values in $[0,1)^2$. Next, we note that a time-frequency shift of the function basically results in a shift of the Zak transform. Due to \eqref{eq:CR_TF-shifts}, we obtain
\begin{equation}\label{eq:Zak_TFshift}
    \mathcal{Z} (M_\eta T_\xi f)(x,\omega) = e^{-2 \pi i \eta x} \mathcal{Z} f(x+\xi,\omega+\eta).
\end{equation}
If we consider the Zak transform of a Fourier transform of a function, then we obtain
\begin{equation}\label{eq:FT-Zak}
    \mathcal{Z} \widehat{f}(x,\omega) = e^{2 \pi i x \omega} \mathcal{Z} f(\omega, -x).
\end{equation}
This follows directly from the Poisson summation formula, which, in its standard form, is
\begin{equation}
    \sum_{k \in \Z} f(k) = \sum_{l \in \Z} \widehat{f}(l).
\end{equation}
As the Fourier transform intertwines translation and modulation, precisely
\begin{equation}
    \F T_x = M_{-x} \F
    \quad \text{ and } \quad
    \F M_\omega = T_\omega \F,
\end{equation}
the phase factor in \eqref{eq:Zak_TFshift} appears then due to the commutation relation \eqref{eq:CR_TF-shifts}. We also note that the Zak transform extends to a unitary operator from $L^2(\R)$ onto $L^2([0,1)^2)$. We refer to \cite[Thm.~8.2.3]{Gro01} for the details of this fact.

We close the survey on the Zak transform by listing what we call the trivial zeros. They occur due to the parity of $f$. For the details see \cite[\S~5]{Jan88} or \cite[\S~4]{HorLemVid25}. We have (see Figure~\ref{fig:zeros}):
\begin{itemize}
    \item For a suitable $f$, if $f(t) = f(-t)$, so $f$ is an even function, then
    \begin{equation}\label{eq:Zak_zeros_even}
        \mathcal{Z} f\left(\tfrac{1}{2}+k, \tfrac{1}{2}+l \right) = 0, \quad (k,l) \in \Z^2,
    \end{equation}
    \item For a suitable $f$, if $f(t) = -f(-t)$, so $f$ is an odd function, then
    \begin{equation}\label{eq:Zak_zeros_odd}
        \mathcal{Z} f\left(k, l \right) = \mathcal{Z} f\left(\tfrac{1}{2}+k, l \right) = \mathcal{Z} f\left(k, \tfrac{1}{2}+l \right) = 0, \quad (k,l) \in \Z^2.
    \end{equation}
\end{itemize}
\begin{figure}[ht]
    \centering
    \includegraphics[width=0.425\linewidth]{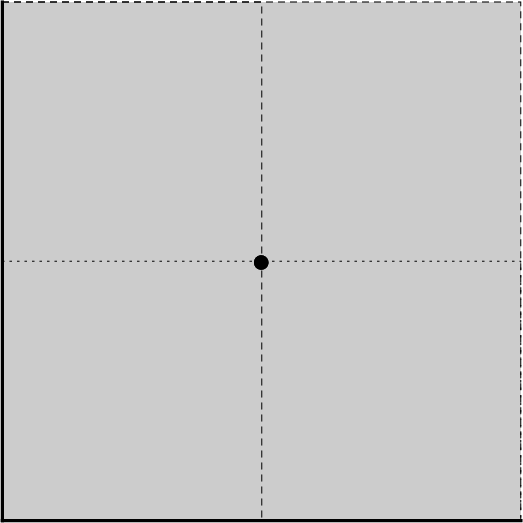}
    \hfill
    \includegraphics[width=0.425\linewidth]{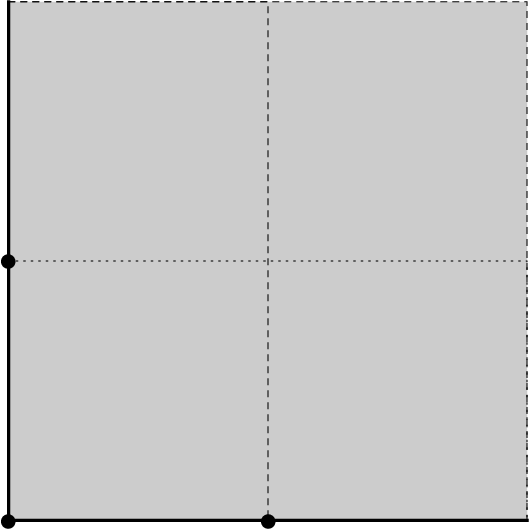}
    \caption{Zeros of $\mathcal{Z} f(x,\omega)$ of a suitable even (left) and odd (right) function in $[0,1)^2$, obtained by parity. Further zeros may exist, depending on $f$.}
    \label{fig:zeros}
\end{figure}

Here, the condition that $f$ is suitable can be weakened to $f \in W_0(\R)$ but cannot be extended to $f \in L^2(\R)$. It implies that $\mathcal{Z} f$ is continuous (see \cite[Prop.~8.2.1~(c)]{Gro01}). A remarkable property of the Zak transform is that, if $\mathcal{Z} f$ is continuous, then it has (at least) a zero in $[0,1)^2$. The indicator function $\chi_{[0,1]}$ is not suitable and not in $W_0(\R)$. It is not continuous, hence, not in $M^1(\R) \subset W_0(\R) \cap \F W_0(\R)$, and $|\mathcal{Z} \chi_{[0,1]}| = 1$ almost everywhere.

\section{Results}\label{sec:results}
We now present a result of how the zeros of $\mathcal{Z} g$ determine the frame property of integer oversampled Gabor systems. The new aspect is that we do not insist that the underlying set is a lattice. Thus, the result belongs to the study of non-uniform Gabor systems. We refer to \cite{AscFeiKai14}, \cite{DoeRom14}, \cite{FeiKai04}, \cite{GroOrtRom15} to name just a few results in this direction.
\begin{theorem}\label{thm:frame_zeros}
    For $g \in M^1(\R)$, let $\{z_1, \ldots, z_M\} \subset [0,1)^2$ be distinct zeros of its Zak transform, i.e., $\mathcal{Z} g(z_m) = 0$, $m = 1, \ldots , M$. Then, the multi-window Gabor system $\mathfrak{G}(\{\pi(z_m)g\}_{m=1}^M,\Z^2)$ is not a frame for $L^2(\R)$. Equivalently, we have that the Gabor system $\G \left(g, \bigcup_{m=1}^M (\Z^2+ z_m) \right)$ is not a frame for $L^2(\R)$.
\end{theorem}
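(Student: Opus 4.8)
The plan is to diagonalize the multi-window frame operator by means of the Zak transform and then to read off that its lower frame bound must vanish. I would begin from the classical Zak criterion for integer-sampled Gabor systems (cf.\ \cite{Gro01}): for a suitable window $h$ and the lattice $\Z^2$, formulas \eqref{eq:Zak_periodic} and \eqref{eq:Zak_TFshift} give $\mathcal Z(\pi(k)h)(x,\omega)=e^{2\pi i(k_1\omega-k_2 x)}\,\mathcal Z h(x,\omega)$ for $k=(k_1,k_2)\in\Z^2$, i.e.\ $\mathcal Z(\pi(k)h)$ is $\mathcal Z h$ multiplied by a character of $\Z^2$. Since $\{e^{2\pi i(k_1\omega-k_2 x)}\}_{k\in\Z^2}$ is an orthonormal basis of $L^2([0,1)^2)$ and $\mathcal Z\colon L^2(\R)\to L^2([0,1)^2)$ is unitary, Parseval's identity on the torus yields
\begin{equation}
    \langle S_{\G(h,\Z^2)}f,f\rangle=\sum_{k\in\Z^2}|\langle f,\pi(k)h\rangle|^2=\int_{[0,1)^2}|\mathcal Z h(z)|^2\,|\mathcal Z f(z)|^2\,dz.
\end{equation}
Applying this with $h=\pi(z_m)g$, for which $|\mathcal Z(\pi(z_m)g)(z)|=|\mathcal Z g(z+z_m)|$ by \eqref{eq:Zak_TFshift}, summing over $m=1,\dots,M$, and recalling from \eqref{eq:frame_shifts} that this sum is precisely the frame operator $S_{\mathfrak{G}(\{\pi(z_m)g\}_{m=1}^M,\Z^2)}$, I would obtain
\begin{equation}
    \langle S_{\mathfrak{G}(\{\pi(z_m)g\}_{m=1}^M,\Z^2)}f,f\rangle=\int_{[0,1)^2}\Phi(z)\,|\mathcal Z f(z)|^2\,dz,\qquad \Phi(z):=\sum_{m=1}^M|\mathcal Z g(z+z_m)|^2.
\end{equation}

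Because $\mathcal Z$ is unitary, $\|f\|_{L^2}^2=\|\mathcal Z f\|_{L^2([0,1)^2)}^2$, and since $\mathcal Z$ is onto one may insert functions $\mathcal Z f$ concentrated near an arbitrary point of $[0,1)^2$; hence the frame inequality \eqref{eq:frame} for the multi-window system holds if and only if $\operatorname{ess\,inf}\Phi>0$ and $\operatorname{ess\,sup}\Phi<\infty$, these extremal values being the optimal frame bounds. The second condition is satisfied: as $g\in M^1(\R)$, $\mathcal Z g$ is continuous (\cite[Prop.~8.2.1~(c)]{Gro01}) and $|\mathcal Z g|$ is $\Z^2$-periodic by \eqref{eq:Zak_periodic}, hence bounded, so $\Phi$ is continuous, nonnegative and bounded. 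The first condition fails: evaluating at the origin, which lies in $[0,1)^2$, gives
\begin{equation}
    \Phi(0,0)=\sum_{m=1}^M|\mathcal Z g(z_m)|^2=0,
\end{equation}
since every $z_m$ is a zero of $\mathcal Z g$. A nonnegative continuous function on $[0,1)^2$ that vanishes at a point has essential infimum $0$, so the optimal lower frame bound equals $0$ and the multi-window Gabor system is not a frame. The equivalent statement for $\G(g,\bigcup_{m=1}^M(\Z^2+z_m))$ is then immediate from \eqref{eq:frame_shifts}, which identifies the two frame operators; the hypothesis that $z_1,\dots,z_M\in[0,1)^2$ are distinct ensures the cosets $\Z^2+z_m$ are pairwise distinct, so the union is a genuine separated periodic structure with $M$ points per period.

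I do not expect a serious obstacle here: the substance is the diagonalization in the first paragraph, which is just the single-window Zak criterion applied to the $M$ integer-shifted windows $\pi(z_1)g,\dots,\pi(z_M)g$. The point worth stressing is that, regardless of where the zeros $z_1,\dots,z_M$ sit inside $[0,1)^2$, they are all simultaneously translated to the single point $z=0$ in the argument of $\Phi$, so $\Phi$ has an honest zero --- not merely points where it is small --- and the lower frame bound collapses in one stroke. The only minor technical care needed is the passage from the quadratic-form identity to pointwise essential bounds on $\Phi$, which is routine since $\mathcal Z$ is a unitary bijection onto $L^2([0,1)^2)$.
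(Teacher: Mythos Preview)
Your proof is correct and follows essentially the same route as the paper's: both diagonalize the multi-window frame operator over $\Z^2$ via the Zak transform to obtain the multiplier $\Phi(z)=\sum_{m=1}^M|\mathcal{Z}g(z+z_m)|^2$, then observe that $\Phi(0)=0$ forces the lower frame bound to vanish. The only cosmetic difference is that you work with the quadratic form $\langle Sf,f\rangle=\int\Phi\,|\mathcal{Z}f|^2$ via Parseval, whereas the paper establishes the operator identity $\mathcal{Z}S\mathcal{Z}^{-1}=\Phi\cdot$ directly; the two formulations are equivalent.
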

\begin{proof}
    To keep this article self-contained, we repeat a standard proof, as given in the book of Gröchenig \cite[Chap.~8.3]{Gro01}, that the Zak transform diagonalizes the frame operator. More precisely, for a multi-window Gabor system we will show (cf.~\cite[eq.~(8.18)]{Gro01}) that
    \begin{equation}
        \mathcal{Z} \sum_{m=1}^M S_{\G(g_m,\Z^2)} f = \left(\sum_{m=1}^M |\mathcal{Z} g_m|^2\right) \mathcal{Z} f, \quad f \in L^2(\R).
    \end{equation}
    To keep notation simple, we carry out the calculations for the case of a single window, as
    \begin{equation}
        \mathcal{Z} \left(\sum_{m=1}^M S_{\G(g_m,\Z^2)} f \right) = \sum_{m=1}^M \left(\mathcal{Z} S_{\G(g_m,\Z^2)} f \right).
    \end{equation}
    We start with combining \eqref{eq:Zak_TFshift} and \eqref{eq:Zak_periodic} to conclude that
    \begin{equation}\label{eq:Zak-TF-shift_kl}
        \mathcal{Z}(M_l T_k g_m)(x,\omega) = e^{-2 \pi i l x} \mathcal{Z} g_m (x+k,\omega+l) = e^{2 \pi i (k \omega - l x)} \mathcal{Z} g_m(x,\omega), \quad k,l \in \Z.
    \end{equation}
    Next, using that $\mathcal{Z}: L^2(\R) \to L^2([0,1]^2)$ is unitary, we compute
    \begin{align}
        \mathcal{Z} (S_{\G(g_m,\Z^2)} f)(x,\omega)
        & = \sum_{(k,l)\in \Z^2} \langle f, M_l T_k g_m \rangle_{L^2(\R)} \, \mathcal{Z} (M_l T_k g_m)(x,\omega)\\
        & = \left(\sum_{(k,l) \in \Z^2} \langle \mathcal{Z} f, \mathcal{Z} (M_l T_k g_m) \rangle_{L^2([0,1]^2)} \, e^{2\pi i (k \omega - l x)} \right) \, \mathcal{Z} g_m (x, \omega).\label{eq:ZSZ-1f}
    \end{align}
    We have a look at the inner product. Using \eqref{eq:Zak-TF-shift_kl}, we obtain
    \begin{equation}
        \langle \mathcal{Z} f, \mathcal{Z} (M_l T_k g_m) \rangle_{L^2([0,1]^2)} = \iint_{[0,1]^2} \mathcal{Z}f(x,\omega) \overline{\mathcal{Z} g_m (x,\omega)} e^{-2 \pi i (k \omega - l x)} \, d(x,\omega).
    \end{equation}
    These are Fourier coefficients of $\mathcal{Z} f \overline{\mathcal{Z}g_m}$, and the whole expression in the parentheses in \eqref{eq:ZSZ-1f} is simply a Fourier series of this expression. Thus,
    \begin{equation}
        \sum_{m=1}^M \mathcal{Z} S_{\G(g_m,\Z^2)} f = \left(\sum_{m=1}^M |\mathcal{Z} g_m|^2 \right) \mathcal{Z} f
        \quad \Longleftrightarrow \quad
        \sum_{m=1}^M \mathcal{Z} S_{\G(g_m,\Z^2)} \mathcal{Z}^{-1} = \sum_{m=1}^M |\mathcal{Z} g_m|^2.
    \end{equation}
    This was the first part of the proof. We also see that $\sum_{m=1}^M\mathcal{Z} S_{\G(g,\Z^2)} \mathcal{Z}^{-1}$ is a multiplication operator which has the same spectrum as the frame operator. A multiplication operator $f \mapsto M f$ is bounded if and only if $M \in L^\infty$ and invertible if and only if $M^{-1} \in L^\infty$. Hence,
    \begin{equation}
        \bigcup_{m=1}^M \G\left(\pi(z_m) g, \Z^2\right) \text{ is a frame}
        \quad \Longleftrightarrow \quad
        0 < A \leq \sum_{m=1}^M |\mathcal{Z}(\pi(z_m) g)(x,\omega)|^2 \leq B < \infty.
    \end{equation}
    By using \eqref{eq:Zak_TFshift} again, we see that
    \begin{equation}
        \sum_{m=1}^M |\mathcal{Z}(\pi(z_m) g)(z)|^2 = \sum_{m=1}^M |\mathcal{Z}(g)(z+z_m)|^2, \quad z=(x,\omega).
    \end{equation}
    By assumption $\mathcal{Z}g(z_m) = 0$ for $m=1, \ldots , M$. Thus, by choosing $z=0$, we see that
    \begin{equation}\label{eq:frame_zeros}
        \min_{z \in [0,1)^2} \sum_{m=1}^M|\mathcal{Z}(g)(z+z_m)|^2 = \sum_{m=1}^M|\mathcal{Z}(g)(z_m)|^2 = 0.
    \end{equation}
    forcing the lower frame bound to vanish, and the result is proved.
\end{proof}
The methods in the proof of Theorem~\ref{thm:frame_zeros} may be considered standard in time-frequency analysis. The novel aspect is that we apply it to a periodic structure, which can be decomposed into finitely many shifted copies of a lattice. So far, the method has only been applied to lattices that could be separated into shifts of a coarser lattice. In the same spirit, our approach can be extended to periodic structures of rational density by using techniques as presented by Zibulski and Zevii \cite{ZibZee97} (see also \cite[Chap.~8.3]{Gro01} or \cite{LyubarskiiNes_Rational_2013}).

Note that the result also holds for $\mathcal{G}(g, \bigcup_{m=1}^M(\Z^2+z_m+\xi))$, i.e., if we consider a global shift by $\xi \in \R^2$, as the location of all zeros in \eqref{eq:frame_zeros} is shifted simultaneously. As $\pi$ is unitary, the invariance under global shifts generally reflects itself in the frame inequality~\eqref{eq:frame}:
\begin{equation}
    \sum_{\gamma \in \Gamma+\xi} |\langle f, \pi(\gamma) g \rangle|^2 = \sum_{\gamma \in \Gamma} |\langle f, \pi(\gamma+\xi) g \rangle|^2 = \sum_{\gamma \in \Gamma} |\langle \pi^{-1}(\xi)f, \pi(\gamma) g \rangle|^2, \quad \forall f \in L^2(\R).
\end{equation}
With these arguments, all results for Gabor systems hold true for global shifts in the sequel.

At this point, we remark that several versions of the Zak transform exist. For example, in \cite{HorLemVid25} or \cite{Lem16} the following version is used (re-call $\mathcal{D}_a$ from \eqref{eq:dilation})
\begin{equation}\label{eq:tilde_Zak_a}
    \widetilde{\mathcal{Z}}_a f(x,\omega) = \sqrt{a} \sum_{k \in \Z} f(a(k-x)) e^{2 \pi i k \omega} = \sum_{k \in \Z} M_\omega T_x \mathcal{D}_a^{-1} f(k) = \mathcal{Z} (\mathcal{D}_a^{-1} f)(x,\omega).
\end{equation}
This can be used to study Gabor systems of the form $\G(\mathcal{D}_a^{-1} g, \Z^2)$. By unitary equivalence, this is the same as studying properties of $\G(g, D_a \Z^2)$, so a Gabor system with window g over the rectangular lattice $a \Z \times \frac{1}{a} \Z$. On the other hand, there is the following version, which is, for example, presented in \cite{Jan88}. Setting $z = (x,\omega) \in \R^2$, we can write this version~as
\begin{align}\label{eq:Zak_a}
    \mathcal{Z}_a f(z)
    & = \sqrt{a} \sum_{k \in \Z} f(a k - x) e^{2 \pi i a k \omega}
    = \sum_{k \in \Z} \mathcal{D}_a^{-1} \pi(z) f(k)
    = \sum_{k \in \Z} \pi(D_a^{-1} z) \mathcal{D}_a^{-1} f(k)\\
    & = \mathcal{Z} (\mathcal{D}_a^{-1}f) (D_a^{-1} z).
\end{align}
The essential properties from $\mathcal{Z}$ carry over to $\mathcal{Z}_a$, but the main difference is that $\mathcal{Z}_a$ is quasi-periodic with respect to the lattice $a \Z \times \frac{1}{a} \Z$. Hence, we have that $\mathcal{Z}_a$ is determined by its values on $[0,a) \times [0,\frac{1}{a})$ and it is a unitary operator from $L^2(\R)$ onto $L^2(D_a[0,1]^2)$. This version of the Zak transform, combined with the techniques presented in the proof above, has long been used to study Gabor systems over rectangular lattices of the form $a \Z \times b \Z$ with $(ab)^{-1} \in \N$. All results in this article carry over to $\mathcal{Z}_a$ and $a \Z \times \frac{1}{a} \Z$ with minor adjustments. This technique has also been expanded to the case that $(ab)^{-1} = \frac{p}{q} \in \Q$, leading to determining whether certain $q \times q$ matrices have full rank (see \cite[Chap.~8.3]{Gro01}). A rectangular lattice as an index set is quite restrictive, as the shifts $\pi(\xi_n)$ must possess very restricted symmetries. This is discussed to some extent in \cite[\S~3]{HorLemVid25}.

The freedom of allowing arbitrary periodic configurations leads to the following result.
\begin{proposition}\label{pro:frame_zeros}
    Let $g \in M^1(\R)$ and assume $\mathcal{Z}g$ only has $N$ separated zeros in $[0,1)^2$. Pick $K > 1$ points $\{\xi_1, \ldots , \xi_K\}$ from $[0,1)^2$ i.i.d.\ randomly, then the Gabor system
    \begin{equation}
        \G\left(g, \bigcup_{k=1}^K (\Z^2+\xi_k) \right) \text{ is a frame with probability 1.}
    \end{equation}
    If $K > N$, then $\G(g, \bigcup_{k=1}^K (\Z^2+\xi_k))$ is a frame for any choice of $K$ distinct points in $[0,1)^2$.
\end{proposition}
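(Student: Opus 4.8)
\emph{Proof proposal.} The plan is to read everything off from the diagonalization established in the proof of Theorem~\ref{thm:frame_zeros}. Applying that computation with $M=K$ and $z_m=\xi_k$, the Gabor system $\G\bigl(g,\bigcup_{k=1}^K(\Z^2+\xi_k)\bigr)$ is a frame for $L^2(\R)$ if and only if the nonnegative function
\[
    \Phi(z) \;=\; \sum_{k=1}^K \bigl|\mathcal{Z}g(z+\xi_k)\bigr|^2,
    \qquad z\in[0,1)^2,
\]
is bounded above and below by positive constants. Because $g\in M^1(\R)\subset W_0(\R)$, the Zak transform $\mathcal{Z}g$ is continuous and hence bounded on the compact torus $\R^2/\Z^2$, so $\Phi$ is continuous and bounded (equivalently, $g$ is a Bessel window); the upper bound is therefore automatic, and the only issue is whether $\Phi$ is bounded away from $0$. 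Since $\Phi\ge 0$ is continuous on a compact set, this is equivalent to $\Phi$ having no zeros, and $\Phi(z)=0$ holds exactly when $\mathcal{Z}g(z+\xi_k)=0$ for every $k$, i.e.\ when $z+\xi_k$ lies in the zero set $Z$ of $\mathcal{Z}g$ for every $k$. Note that $Z$ is $\Z^2$-periodic, since $|\mathcal{Z}g|$ is by \eqref{eq:Zak_periodic}, and by hypothesis $Z\cap[0,1)^2$ consists of exactly $N$ points.

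For the deterministic claim, assume $K>N$ and suppose toward a contradiction that $\Phi(z_0)=0$ for some $z_0$. Then $z_0+\xi_1,\dots,z_0+\xi_K$ all lie in $Z$; reducing them modulo $\Z^2$ to lie in $[0,1)^2$ yields $K$ points of $Z\cap[0,1)^2$, and these are pairwise distinct because the $\xi_k$ are pairwise distinct in $[0,1)^2$ and hence pairwise incongruent modulo $\Z^2$. This contradicts $|Z\cap[0,1)^2|=N<K$. Hence $\Phi>0$ everywhere, and the Gabor system is a frame for \emph{any} choice of $K$ distinct points.

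For the probabilistic claim it suffices to consider $K=2$: enlarging the set of shifts only increases $\Phi$ pointwise, and the upper bound always holds, so a frame for two shifts forces a frame for $K$ shifts. If $\Phi(z_0)=0$ for some $z_0$, then $z_0+\xi_1\in Z$ and $z_0+\xi_2\in Z$, whence $\xi_1-\xi_2=(z_0+\xi_1)-(z_0+\xi_2)$ is congruent modulo $\Z^2$ to a difference of two points of $Z\cap[0,1)^2$; there are at most $N^2$ such differences, so this is a finite set. When $\xi_1,\xi_2$ are drawn i.i.d.\ uniformly from $[0,1)^2\cong\R^2/\Z^2$, the difference $\xi_1-\xi_2$ is uniformly distributed on the torus by translation invariance of Lebesgue measure, hence lands in a prescribed finite (Lebesgue-null) set with probability $0$. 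Therefore the event ``$\Phi$ has a zero'' has probability $0$, and with probability $1$ the system $\G\bigl(g,\bigcup_{k=1}^K(\Z^2+\xi_k)\bigr)$ is a frame.

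Given the diagonalization from Theorem~\ref{thm:frame_zeros}, the remaining arguments are elementary — a pigeonhole count for the deterministic part and a null-set estimate for the probabilistic part — so I do not expect a serious obstacle. The only point requiring care is to keep the bookkeeping modulo $\Z^2$ consistent: one must remember that it is $|\mathcal{Z}g|$, not $\mathcal{Z}g$ itself, that is $\Z^2$-periodic (so that the zero set is genuinely periodic with $N$ points per cell), and to invoke continuity of $\mathcal{Z}g$ via $M^1(\R)\subset W_0(\R)$ in order to pass from ``$\Phi$ has no zeros'' to ``$\Phi$ is bounded below''.
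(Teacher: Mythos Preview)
Your proof is correct and follows essentially the same approach as the paper: reduce to positivity of $\Phi(z)=\sum_k|\mathcal{Z}g(z+\xi_k)|^2$ via the diagonalization from Theorem~\ref{thm:frame_zeros}, then exploit that the zero set of $\mathcal{Z}g$ in a period cell is finite. The paper's proof is considerably terser---it simply observes that $K$ i.i.d.\ random points almost surely do not all fall into a shift of a finite set and leaves the deterministic pigeonhole implicit---whereas you spell out both parts carefully (including the clean reduction of the probabilistic claim to $K=2$ via monotonicity of $\Phi$ and the difference $\xi_1-\xi_2$), but the substance is the same.
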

\begin{proof}
    The fact that $g \in M^1(\R)$ guarantees that we have a finite upper frame bound. Thus, similar to \eqref{eq:frame_zeros}, in order to have a frame, we need to ensure that
    \begin{equation}
        \min_{z \in [0,1)^2} \sum_{m=1}^M |\mathcal{Z} g (z+z_m)|^2 > 0.
    \end{equation}
    As the zero set of $\mathcal{Z} g$ is discrete and finite by assumption, the probability that $K$ i.i.d.\ randomly picked points from $[0,1)^2$ all belong to (a shift of) the zero set of $\mathcal{Z}g$ is zero.
\end{proof}

A consequence of Theorem~\ref{thm:frame_zeros} and the results of Gröchenig and Lysubarskii \cite{GroechenigLyubarskii_Hermite_2007},~\cite{GroechenigLyubarskii_SuperHermite_2009} concerns the structure of the location of the zeros of Zak transforms of Hermite functions.
\begin{corollary}\label{cor:location_zeros}
    For the $n$-th Hermite function $h_n$, the Zak transform $\mathcal{Z} h_n$ can have at most $n+1$ zeros $\{z_1, \ldots , z_{n+1}\} \subset [0,1)^2$ such that
    \begin{equation}
        \Gamma(\Z^2,\{z_\ell\}_{\ell=1}^{n+1}) = \bigcup_{\ell = 1}^{n+1} (\Z^2 + z_\ell) \text{ is a (shifted) lattice.}
    \end{equation}
\end{corollary}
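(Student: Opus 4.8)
The plan is to derive the bound by contradiction, playing Theorem~\ref{thm:frame_zeros} off against the density theorem of Gröchenig and Lyubarskii \cite{GroechenigLyubarskii_Hermite_2007}, \cite{GroechenigLyubarskii_SuperHermite_2009}. Suppose, contrary to the claim, that there are $M \geq n+2$ distinct points $z_1, \ldots, z_M \in [0,1)^2$ with $\mathcal{Z} h_n(z_m) = 0$ for all $m$ and such that $\Gamma := \bigcup_{m=1}^{M}(\Z^2 + z_m)$ is a shifted lattice. Since $h_n \in \mathcal{S}(\R) \subset M^1(\R)$, Theorem~\ref{thm:frame_zeros} applies directly and shows that $\G(h_n, \Gamma)$ is \emph{not} a frame for $L^2(\R)$.

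Next I would exhibit $\Gamma$ as a translate of an honest lattice. Because $z_1 \in \Gamma$ and $\Gamma$ is a shifted lattice, the set $\Lambda := \Gamma - z_1$ contains $0$ and is therefore a genuine lattice, with $\Gamma = \Lambda + z_1$; translation does not change the density, so $D(\Lambda) = D(\Gamma) = M \cdot D(\Z^2) = M \geq n+2$, that is, $\vol(\R^2/\Lambda) = 1/M < 1/(n+1)$. The Gröchenig--Lyubarskii theorem then guarantees that $\G(h_n, \Lambda)$ \emph{is} a frame for $L^2(\R)$.

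Finally I would transport this positive conclusion from $\Lambda$ back to $\Gamma$. By the invariance of the frame inequality under global shifts recorded in the remark following Theorem~\ref{thm:frame_zeros} (equivalently, because $\pi(z_1)$ is unitary), $\G(h_n, \Lambda + z_1) = \G(h_n, \Gamma)$ is a frame, which contradicts the first paragraph. Hence no such family of $n+2$ zeros can exist, and this is exactly the assertion that at most $n+1$ zeros of $\mathcal{Z} h_n$ in $[0,1)^2$ can have the property that $\bigcup_\ell (\Z^2 + z_\ell)$ is a shifted lattice.

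I do not anticipate a genuine obstacle; the argument is essentially a bookkeeping exercise combining two results already available in the excerpt. The two points that need a little care are (i) making sure the Gröchenig--Lyubarskii statement, which is phrased for lattices rather than shifted lattices, really applies — this is why one first translates $\Gamma$ so that it passes through the origin, turning it into a lattice of density $M$ — and (ii) invoking the shift-invariance of the frame property to carry the frame conclusion back to $\Gamma = \Lambda + z_1$; both ingredients have been prepared in the text preceding the corollary.
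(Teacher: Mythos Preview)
Your argument is correct and matches the paper's own proof: both combine Theorem~\ref{thm:frame_zeros} (the periodic configuration built from zeros is not a frame) with the Gr\"ochenig--Lyubarskii density theorem (a lattice of density $>n+1$ does give a frame) to force a contradiction. You are simply more explicit than the paper about translating the shifted lattice to pass through the origin and about invoking shift-invariance, which is fine.
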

\begin{proof}
    Due to the results of Gröchenig and Lyubarskii \cite{GroechenigLyubarskii_Hermite_2007},~\cite{GroechenigLyubarskii_SuperHermite_2009} we know that a Gabor system $\G(h_n, \Lambda)$ over a lattice $\Lambda$ with $D(\Lambda) > (n+1)$ is a frame. If $\{z_1, \ldots , z_M\}$, $M > n+1$, are zeros of $\mathcal{Z}h_n$ and $\Gamma = \Gamma(\Z^2,\{z_m\}_{m=1}^M)$, then $\Gamma$ has density $D(\Gamma) = M > n+1$. As $\mathcal{G}(h_n,\Gamma)$ is not a frame by Theorem~\ref{thm:frame_zeros}, $\Gamma$ cannot be a lattice.
\end{proof}
Note that this does not exclude the option that $\mathcal{Z}h_n$ has more than $n+1$ zeros in $[0,1)^2$. This is a key message of the paper, which leads to the following observation:

\medskip

    \textit{There exist Gabor systems with Hermite functions of order $n \geq 1$ over periodic index sets of density strictly greater than $n+1$ which are not frames.}

\medskip

Although the following result is an easy consequence of Theorem~\ref{thm:frame_zeros}, it marks the starting point of the study of Gabor systems $\G(h_n,\Gamma)$ with Hermite functions, $n\geq 1$, where $\Gamma$ is not a lattice but a (more general) periodic set.
\begin{theorem}\label{thm:h1}
    Set $z_1 = (0,0)$, $z_2=(1/2,0)$, and $z_3=(0,1/2)$. Then, the periodic configuration $\Gamma(\Z^2,\{z_m\}_{m=1}^3)$ has density 3 and the Gabor system $\G\left(h_1,\Gamma(\Z^2,\{z_m\}_{m=1}^3)\right)$ is not a Gabor frame for $L^2(\R)$.
\end{theorem}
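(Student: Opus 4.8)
The plan is to recognize this as an immediate application of Theorem~\ref{thm:frame_zeros} together with the list of ``trivial zeros'' recorded in \eqref{eq:Zak_zeros_odd}. First I would note that $h_1 \in \mathcal{S}(\R) \subset M^1(\R)$, so the hypotheses of Theorem~\ref{thm:frame_zeros} are met; in particular $\mathcal{Z}h_1$ is continuous and the upper frame bound of any associated Gabor system is automatically finite. Next, from the explicit formula $h_1(t) = H_1(t)\phi(t)$ with $H_1$ a polynomial of odd degree $1$ (indeed $h_1(t) = 4\pi C_1\, t\, e^{-\pi t^2}$), one sees that $h_1$ is an odd function, $h_1(-t) = -h_1(t)$.

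Since $h_1$ is odd and suitable, \eqref{eq:Zak_zeros_odd} applies: for all $(k,l)\in\Z^2$ we have $\mathcal{Z}h_1(k,l) = \mathcal{Z}h_1(\tfrac12+k,l) = \mathcal{Z}h_1(k,\tfrac12+l) = 0$. Taking $(k,l)=(0,0)$ in the first, second, and third of these identities respectively shows that $z_1=(0,0)$, $z_2=(\tfrac12,0)$, and $z_3=(0,\tfrac12)$ are all zeros of $\mathcal{Z}h_1$. These three points are pairwise distinct and all lie in $[0,1)^2$, so the hypothesis ``$\{z_1,z_2,z_3\}\subset[0,1)^2$ distinct zeros of $\mathcal{Z}h_1$'' of Theorem~\ref{thm:frame_zeros} holds with $M=3$. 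Applying that theorem gives directly that $\mathfrak{G}(\{\pi(z_m)h_1\}_{m=1}^3,\Z^2)$ is not a frame, equivalently that $\G\big(h_1,\bigcup_{m=1}^3(\Z^2+z_m)\big) = \G\big(h_1,\Gamma(\Z^2,\{z_m\}_{m=1}^3)\big)$ is not a frame for $L^2(\R)$.

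It remains to justify the density claim. By the formula for the density of a periodic structure, $D(\Gamma(\Z^2,\{z_m\}_{m=1}^3)) = 3\cdot D(\Z^2) = 3/\vol(\R^2/\Z^2) = 3$; to see that the configuration genuinely carries three points per fundamental domain (so that this count is not an overcount) one checks that the differences $z_1-z_2$, $z_1-z_3$, $z_2-z_3$ lie in $\{(\tfrac12,0),(0,\tfrac12),(\tfrac12,-\tfrac12)\}$, none of which is in $\Z^2$, hence the three cosets $\Z^2+z_m$ are pairwise disjoint.

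I do not expect any real obstacle here: every ingredient is either quoted (Theorem~\ref{thm:frame_zeros}, the parity zeros \eqref{eq:Zak_zeros_odd}) or an elementary verification. The only points requiring a moment's care are confirming that the three chosen shift vectors really are among the parity-forced zeros of $\mathcal{Z}h_1$ and that they are distinct modulo $\Z^2$, which is exactly the bookkeeping carried out above.
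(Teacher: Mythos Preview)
Your proposal is correct and follows exactly the paper's approach: observe that $h_1$ is odd, invoke the parity zeros \eqref{eq:Zak_zeros_odd} to see that $z_1,z_2,z_3$ are distinct zeros of $\mathcal{Z}h_1$ in $[0,1)^2$, and then apply Theorem~\ref{thm:frame_zeros}. Your additional verification that the three cosets are pairwise disjoint (so the density really is $3$) is a nice bit of bookkeeping the paper leaves implicit.
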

\begin{proof}
    Since $h_1(t) = - h_1(-t)$ is an odd function, we know that
    \begin{equation}\label{eq:zeros_Zh1}
        \mathcal{Z} h_1(0,0) = \mathcal{Z} h_1\left(\tfrac{1}{2},0\right) = \mathcal{Z} h_1\left(0,\tfrac{1}{2}\right) = 0.
    \end{equation}
    The result now follows from Theorem~\ref{thm:frame_zeros}.
\end{proof}
This provides the negative answer as announced in the introduction. We exploited the fact that the Zak transform $\mathcal{Z}h_1$ has (at least) three zeros in $[0,1)^2$. Thus, by Theorem~\ref{thm:frame_zeros}, there exists a periodic configuration $\Gamma$ of density 3 such that $\G( h_1, \Gamma)$ is not a frame. In addition, we know the exact location of the three zeros making $\Gamma$ explicit, which we depict in Figure~\ref{fig:union3}. While it is not a lattice, it still exhibits periodicities and symmetries.
\begin{figure}[ht]
    \centering
    \includegraphics[width=0.75\linewidth]{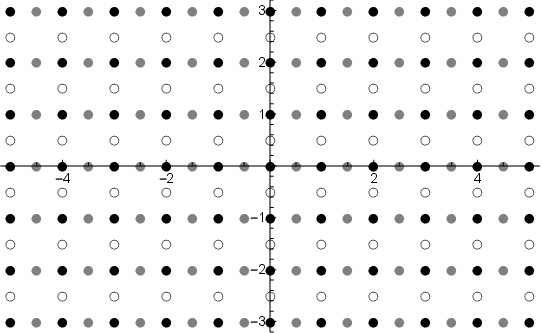}
    \caption{The set $\Gamma = \bigcup_{m=1}^3 (\Z^2 +z_m)$, $z_m \in \{ (0,0), (\frac{1}{2},0), (0,\frac{1}{2})\}$ and its decomposition into three relatively shifted copies of the integer lattice $\Z^2$.}
    \label{fig:union3}
\end{figure}
    
\medskip

\textbf{Examples}. We now give further examples of Gabor systems which are not frames and where the density exceeds the order of the Hermite function by more than 1. Consider the third Hermite function $h_3$. In \cite{Lem16}, Lemvig lists zeros of $\mathcal{Z} \mathcal{D}_{\sqrt{a}}^{-1}h_{4\ell+3}$, $\ell \in \N_0$, $a \in \{\frac{1}{4},\frac{1}{3},\frac{1}{2},2,3,4\}$. Boon, Zak, and Zucker earlier discovered these in a different context for $\mathcal{Z}_{\sqrt{a}} h_{4\ell+3}$ \cite{BooZakZuc83}. Due to a different normalization of the Fourier transform, $\mathcal{Z}_a$ is defined slightly differently in \cite{BooZakZuc83} compared to \eqref{eq:Zak_a}. We have the following known zeros (further zeros may exist), as listed in \cite{BooZakZuc83} (subject to our normalization):

$\bullet$ $\mathcal{Z}_{\sqrt{2}} h_{4\ell+3} (z) = 0$, $z \in \left \lbrace(0,0), \left(\frac{\sqrt{2}}{2},0 \right), \left(0, \frac{1}{2 \sqrt{2}}\right), \left(\frac{\sqrt{2}}{4}, \frac{1}{2 \sqrt{2}}\right), \left(\frac{3\sqrt{2}}{4}, \frac{1}{2 \sqrt{2}}\right) \right \rbrace + D_{\sqrt{2}}\Z^2$.

\noindent
The location of the zeros in $[0,\sqrt{2}) \times [0,\frac{1}{\sqrt{2}})$ is depicted in Figure~\ref{fig:zeros2}.
\begin{figure}[ht]
    \centering
    \includegraphics[width=0.75\linewidth]{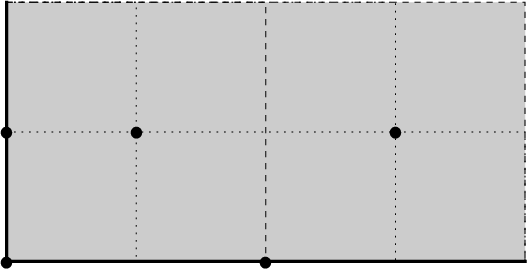}
    \caption{Known zeros of $\mathcal{Z}_{\sqrt{2}} h_{4\ell+3}$ in the fundamental cell of $\sqrt{2} \Z \times \frac{1}{\sqrt{2}} \Z$.}
    \label{fig:zeros2}
\end{figure}

\medskip

$\bullet$ $\mathcal{Z}_{\sqrt{3}} h_{4\ell+3} (z) = 0$, $z \in \left \lbrace(0,0), \left(0, \frac{1}{2 \sqrt{3}}\right), \left(\frac{\sqrt{3}}{2},0 \right), \left(\frac{\sqrt{3}}{3}, 0 \right), \left(\frac{2\sqrt{3}}{3}, 0\right) \right \rbrace + D_{\sqrt{3}}\Z^2$.

\noindent
The location of the zeros in $[0,\sqrt{3}) \times [0,\frac{1}{\sqrt{3}})$ is depicted in Figure~\ref{fig:zeros3}.
\begin{figure}[ht]
    \centering
    \includegraphics[width=0.75\linewidth]{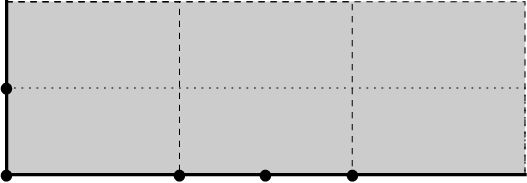}
    \caption{Known zeros for $\mathcal{Z}_{\sqrt{3}} h_{4\ell+3}$ in the fundamental cell of $\sqrt{3} \Z \times \frac{1}{\sqrt{3}} \Z$.}
    \label{fig:zeros3}
\end{figure}

Note that, after dilation, the four zeros with $\omega = 0$ can be used to construct examples of a semi-regular lattice $\Gamma_N \times \Z$ of density $N \in \{2, 3, 4\}$ such that the Gabor system with $\mathcal{D}_{\sqrt{3}}^{-1} h_3$ is not a frame. Denote by $z_1 = (0,0)$, $z_2 = \left(\frac{1}{3},0 \right)$, $z_3 = \left(\frac{1}{2}, 0 \right)$, and $z_4 = \left(\frac{2}{3}, 0\right)$, then
\begin{equation}
    \G\left( \mathcal{D}_{\sqrt{3}}^{-1} h_3, \Gamma_N \times \Z \right), \ \Gamma_N = \bigcup_{k=1}^N (\Z+z_k) \quad \text{ is not a frame for } N=2,3,4.
\end{equation}

\medskip

$\bullet$ $\mathcal{Z}_{2} h_{4\ell+3} (z) = 0$, $z \in \left \lbrace(0,0), \left(0,\frac{1}{4} \right), \left(1, 0 \right), \left(\frac{1}{2}, 0 \right), \left(\frac{3}{2}, 0\right) \right \rbrace + D_{2}\Z^2$.

\noindent
The location of the zeros in $[0,2) \times [0,\frac{1}{2})$ is depicted in Figure~\ref{fig:zeros4}.
\begin{figure}[ht]
    \centering
    \includegraphics[width=0.75\linewidth]{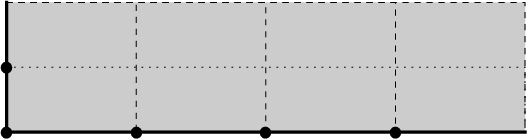}
    \caption{Known zeros for $\mathcal{Z}_2 h_{4\ell+3}$ in the fundamental cell of $2 \Z \times \frac{1}{2} \Z$.}
    \label{fig:zeros4}
\end{figure}

Note that the first three zeros in any of the above lists are due to the parity of $h_{4\ell+3}$. The cases for $\frac{1}{\sqrt{a}}$ are covered by an application of the Poisson summation formula, similarly to \eqref{eq:FT-Zak}, which results in simply rotating the rectangle in figure~\ref{fig:zeros2}, \ref{fig:zeros3}, and \ref{fig:zeros4} by 90 degrees.

Thus, specifically picking $h_3$ (i.e., $\ell=0$) as window function, we are able to design a wealth of Gabor systems over periodic configurations of density 5 which are not frames.

\section{Open questions and new zeros}\label{sec:questions}
While our results give new insights into the mysterious world of Gabor systems with Hermite functions, they immediately raise more questions.
\begin{enumerate}
    \item[(Q1)] For any $n \geq 1$, is there a $\Gamma$ with $D(\Gamma) > n+1$ such that $\G(h_n,\Gamma)$ is not a frame?
    \item[(Q2)] How many zeros does the Zak transform of $h_n$ have?
    \item[(Q3)] How do the zeros of $\mathcal{Z}_a h_n$ differ from the zeros of $\mathcal{Z}h_n$?
\end{enumerate}

In general, it seems that considering $\mathcal{Z}_a h_n$, $a \in \R_+\setminus\{1\}$, introduces new non-trivial zeros for $n \geq 2$, as shown in \cite{BooZakZuc83}, \cite{HorLemVid25}, \cite{Lem16}.

Conjectures about the frame property of Gabor systems with Hermite functions tend to be risky. However, we believe that the following statement about the Zak transform holds.
\begin{conjecture}
    For a generalized first Hermite function with parameters $a$ and $s$, i.e.,
    \begin{equation}
        \widetilde{h}_1(t) = \widetilde{h}_1(t;a,s) = \frac{1}{\sqrt{a}} h_1\left( \frac{t}{a} \right) e^{\pi i s t^2}, \quad a \in \R_+, \ s \in \R,
    \end{equation}
    the Zak transform $\mathcal{Z} \widetilde{h}_1(z)$ has exactly 3 zeros in $[0,1)^2$ and these are determined by \eqref{eq:Zak_zeros_odd}.
\end{conjecture}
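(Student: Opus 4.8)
Before the author's proof, here is how I would try to prove the conjecture.

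\medskip

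\noindent\textbf{Proof proposal.} The plan is, first, to reduce the statement to one about the Zak transform of a \emph{Gaussian with complex parameter}. Since $h_1(t)=4\pi C_1\,t\,e^{-\pi t^2}$, the window $\widetilde h_1(t;a,s)$ equals, up to a nonzero constant, $g_u(t)=t\,e^{\pi i u t^2}$ with $u:=\tfrac{i}{a^2}+s$; as $(a,s)$ ranges over $\R_+\times\R$, the parameter $u$ ranges over the whole upper half–plane $\mathbb{H}$. Writing $\phi_u(t)=e^{\pi i u t^2}$ one has $g_u=\tfrac{1}{2\pi i u}\phi_u'$, hence $\mathcal{Z}g_u=-\tfrac{1}{2\pi i u}\,\partial_x\mathcal{Z}\phi_u$, and combining this with the classical identity $\mathcal{Z}\phi_u(x,\omega)=e^{\pi i u x^2}\,\vartheta_3(\omega-ux;u)$ (valid because $\operatorname{Im}u>0$; here $\vartheta_3(v;u)=\sum_{k\in\Z}e^{\pi i u k^2+2\pi i k v}$) I obtain
\[
  \mathcal{Z}g_u(x,\omega)=e^{\pi i u x^2}\Bigl(\tfrac{1}{2\pi i}\,\vartheta_3'(w;u)-x\,\vartheta_3(w;u)\Bigr),\qquad w:=\omega-ux .
\]

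\medskip

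\noindent Second, I would change variables. Since $\operatorname{Im}u\neq0$, the real–linear map $(x,\omega)\mapsto w=\omega-ux$ is a bijection of $\R^2$ onto $\C$ with inverse $x=-\operatorname{Im}w/\operatorname{Im}u$, carrying $[0,1)^2$ onto a fundamental parallelogram $P$ of the lattice $\Lambda_u:=\Z+u\Z$. A short computation (differentiating $\log|\mathcal{Z}\phi_u|^2$ in $w$) yields the key identity $\mathcal{Z}g_u=\tfrac{1}{2\pi i}\bigl(\partial_w\log|\mathcal{Z}\phi_u|^2\bigr)\,\mathcal{Z}\phi_u$, where $|\mathcal{Z}\phi_u(x,\omega)|^2=|\vartheta_3(w;u)|^2\,e^{-2\pi(\operatorname{Im}w)^2/\operatorname{Im}u}=:\Xi_u(w)$. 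Now $\mathcal{Z}\phi_u$ has exactly one zero in $P$ — the simple zero $w_0=\tfrac{1+u}{2}$ of $\vartheta_3$, which is a nondegenerate minimum of $\Xi_u$ — and $\mathcal{Z}g_u(w_0)=-\tfrac{u}{2\pi i}e^{\pi i u/4}\vartheta_3'(w_0;u)\neq0$. Hence the zeros of $\mathcal{Z}g_u$ in $P$ are precisely the critical points of the real–analytic function $\Xi_u\colon\C/\Lambda_u\to[0,\infty)$ \emph{other than} its minimum $w_0$, so the conjecture becomes the assertion that \emph{$\Xi_u$ has exactly four critical points on the torus}.

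\medskip

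\noindent Third, I would identify four of them. Because $\phi_u$ is even, $\Xi_u$ is invariant under $w\mapsto-w$, so the four $2$–torsion points $0,\tfrac12,\tfrac u2,\tfrac{1+u}{2}$ of $\C/\Lambda_u$ are critical; one of them is $w_0$ (the minimum) and the other three are exactly the trivial zeros \eqref{eq:Zak_zeros_odd} of $\mathcal{Z}g_u$ (at $(x,\omega)\in\{(0,0),(\tfrac12,0),(0,\tfrac12)\}$), which one also checks directly from $\vartheta_3'(0;u)=\vartheta_3'(\tfrac12;u)=0$ and $\vartheta_3'(-\tfrac u2;u)=\pi i\,\vartheta_3(-\tfrac u2;u)$ (the last from the quasi–periodicity of $\vartheta_3$); by the Euler–characteristic constraint of Morse theory on $\mathbb{T}^2$ these three are expected to be two saddles and one maximum. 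What remains — and what I expect to be the main obstacle — is to prove that there are \emph{no further} critical points, equivalently no ``stray'' symmetric pair $\{w_1,-w_1\}$, and to do this \emph{uniformly in} $u\in\mathbb{H}$. The topological degree of the gradient (equivalently of the harmonic map $\partial_w\log\Xi_u=\zeta(w;\Lambda_u)-\alpha w-\beta\bar w$, where $\beta=\pi/\operatorname{Im}u$ and $\alpha$ is fixed by the Legendre relation $\eta_1u-\eta_u=2\pi i$) only forces the \emph{signed} count of critical points besides $w_0$ to equal $1$, which is equally consistent with $3,5,7,\dots$ of them; so the topological argument alone does not close the proof.

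\medskip

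\noindent To eliminate stray critical points I would exploit the very special nature of $\Xi_u$: it is, up to the nonvanishing Gaussian weight, the squared modulus of the Jacobi theta function — the ground state in the lowest Landau level, or the ``one–vortex'' configuration on the torus — where such unimodality is expected. Concretely I would seek a monotonicity argument along the symmetry arcs of $\C/\Lambda_u$ (the real loci of the antiholomorphic involutions preserving $\Lambda_u$), using the heat equation $\partial_u\vartheta_3=\tfrac{1}{4\pi i}\partial_w^2\vartheta_3$, the Jacobi triple product, and variation–diminishing properties of $\vartheta_3$ and $\wp$, to show that along these arcs $\Xi_u$ is strictly monotone between consecutive $2$–torsion points and that its gradient vanishes nowhere off the arcs; the latter amounts, after taking real and imaginary parts, to a Wilmshurst / Khavinson–Neumann–type count for a meromorphic–plus–antilinear harmonic map, whose genus–$0$ bound $3n-2$ would have to be adapted to genus $1$ and then controlled on the sense–reversing region $\{|\wp(w)+\alpha|<\beta\}$. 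For the explicitly named cases (e.g.\ $a=1,\ s=0$) rigorous interval estimates on the rapidly convergent $\vartheta_3$–series, together with the a priori location of the four critical points, should already suffice; the hard part is making such estimates uniform in $u$, for which I would first reduce to a fundamental domain of a congruence subgroup of $\mathrm{SL}(2,\Z)$ via the modular transformation laws of $\vartheta_3$.
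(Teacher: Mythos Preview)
The statement you are attempting to prove is labelled a \emph{conjecture} in the paper and is left open there; the paper offers no proof, so there is nothing to compare your attempt against on that front.

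Your reduction is correct and genuinely clarifying. Writing $\widetilde h_1$ as a nonzero constant times $g_u(t)=t\,e^{\pi i u t^2}$ with $u=s+i/a^2\in\mathbb H$, expressing $\mathcal Z g_u$ through $\partial_x\mathcal Z\phi_u$, and then recognising that in the complex coordinate $w=\omega-ux$ the zeros of $\mathcal Z g_u$ are exactly the critical points of $\Xi_u(w)=|\vartheta_3(w;u)|^2 e^{-2\pi(\operatorname{Im}w)^2/\operatorname{Im}u}$ on $\C/\Lambda_u$ other than the global minimum $w_0=\tfrac{1+u}{2}$, is a clean and (as far as I can check) sound reformulation; it recovers the three trivial zeros of \eqref{eq:Zak_zeros_odd} as the remaining half-periods, and makes precise what ``exactly three'' would mean.

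That said, you do not close the argument, and you say so yourself. The assertion ``$\Xi_u$ has \emph{no} critical points beyond the four half-periods, uniformly in $u\in\mathbb H$'' is left as a programme, and none of the routes you list obviously completes it. As you already note, the Morse/degree count on $\mathbb T^2$ only fixes the signed total and cannot exclude an extra symmetric pair of saddle--extremum critical points. The Wilmshurst/Khavinson--Neumann bound you invoke is a genus-$0$ statement; you would need a genus-$1$ analogue \emph{and} it would have to be sharp enough to give exactly four, which is not what those bounds typically deliver. The monotonicity-along-symmetry-arcs idea is reasonable for rectangular or rhombic $u$, where antiholomorphic involutions of $\C/\Lambda_u$ exist, but for generic $u\in\mathbb H$ there are \emph{no} such involutions and hence no real loci to restrict to; reducing $u$ to a modular fundamental domain does not remedy this, since a generic $u$ in the fundamental domain is still neither rectangular nor rhombic. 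In short: your proposal is a correct and useful reformulation of the conjecture --- arguably the natural one --- but it remains a conjecture, and the crucial missing step is precisely the one you identify as the main obstacle.
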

As $h_1(t) = \widetilde{h}_1(t;1,0)$, the crucial point is that no new non-trivial zeros occur in the Zak transform when varying the parameters. This would be comparable to the Gaussian case. Note that Theorem~\ref{thm:frame_zeros}, Proposition~\ref{pro:frame_zeros}, and Corollary~\ref{cor:location_zeros} hold for generalized Hermite functions. Thus, Theorem~\ref{thm:h1} can be extended to periodic configurations $\Gamma(\Lambda, \{z_m\}_{m=1}^M)$, $\Lambda \subset \R^2$ a lattice with $D(\Lambda) = 1$, by using the results presented in \cite{Fau25_SampTA}.

For a dilated second Hermite function, we now prove the existence of a Gabor system with an oversampling rate of 5, which is not a frame. As shown in \cite{BooZakZuc83} or \cite[Lem.~3]{Lem16}, for $\omega = 1/2$, $\mathcal{Z} (\mathcal{D}^{-1}_a h_2)(x,1/2)$ possesses three zeros for $a \in \{\sqrt{2},\sqrt{3}\}$. We pick $a = \sqrt{2}$ and prove the existence of two more zeros: $z_0 = (x_0,0)$ and $\widetilde{z}_0 = (1-x_0,0)$ with $x_0 \in (0,1/2)$.

The second Hermite function is explicitly given by
\begin{equation}
    h_2(t) = 2^{-1/4} (-1 + 4 \pi t^2) e^{- \pi t^2}.
\end{equation}
The Zak transform $\mathcal{Z} \left(\mathcal{D}^{-1}_{\sqrt{2}} h_2 \right)(x,0)$ with $\omega = 0$ is a real-valued, continuous, even, periodic function on $\R$, depicted in Figure~\ref{fig:Zak_h2}. By using \eqref{eq:FT-Zak} and $\F \mathcal{D}^{-1}_a h_2 = - \mathcal{D}_a h_2$, we obtain
\begin{align}
    \mathcal{Z} \left(\mathcal{D}^{-1}_{\sqrt{2}} h_2 \right)(x,0)
    & = - \mathcal{Z} \left(\mathcal{D}_{\sqrt{2}} h_2 \right)(0,-x)\\
    & = 2^{-1/2} \sum_{k \in \Z} (1-2\pi k^2) e^{-\frac{\pi}{2} k^2} e^{-2 \pi i k x}\\
    & = 2^{-1/2} \sum_{k \in \Z} (1-2\pi k^2) e^{-\frac{\pi}{2} k^2} \cos(2 \pi k x).
\end{align}
\begin{figure}[ht]
    \centering
    \includegraphics[width=0.6\linewidth]{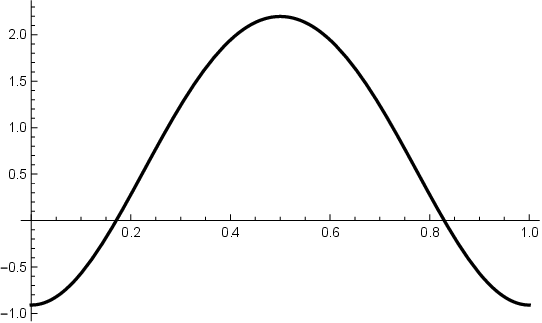}
    \caption{The Zak transform $\mathcal{Z}\left(\mathcal{D}^{-1}_{\sqrt{2}} h_2 \right)(x,0)$ on $[0,1]$.}
    \label{fig:Zak_h2}
\end{figure}

\noindent
We multiply by $2^{1/2}$ and evaluate at $x=0$ and $x = 1/2$. Note that $e^{-\frac{\pi}{2}} > 1/5$.
\begin{equation}
    (x=0): \quad 2^{1/2} \mathcal{Z} \left(\mathcal{D}^{-1}_{\sqrt{2}} h_2 \right)(0,0) = 1 + 2 \sum_{n \geq 1} \underbrace{(1- 2 \pi n^2)}_{<0} e^{-\frac{\pi}{2} n^2} < 1 + 2 (1- 2 \pi) e^{-\frac{\pi}{2}} < 0.
\end{equation}
\begin{align}
    (x=1/2): \quad
    2^{1/2} \mathcal{Z} \left(\mathcal{D}^{-1}_{\sqrt{2}} h_2 \right)(1/2,0)
    & = \sum_{k \in \Z} (-1)^k (1-2\pi k^2) e^{-\frac{\pi}{2} k^2}\\
    & > 1 + 2(-1 + 2\pi)e^{-\frac{\pi}{2}} - 2\sum_{n \geq 2} (1 + 2 \pi n^2) e^{-\frac{\pi}{2} n^2}\\
    & > 1 + 2(-1 + 2\pi)e^{-\frac{\pi}{2}} - 2\sum_{n \geq 2} (1 + 2 \pi n^2) e^{-\pi n}.
\end{align}
We derive closed-form expressions for the series in the \nameref{appendix}. The analytic expressions can be evaluated to arbitrary precision by hand. However, we used Mathematica \cite{Mathematica}, which can actually evaluate the series symbolically, to obtain the numerical bound
\begin{equation}
    2\sum_{n \geq 2} (1 + 2 \pi n^2) e^{-\pi n} < 0.11
    \quad \text{ while } \quad
    1 + 2(-1 + 2 \pi) e^{-\frac{\pi}{2}} > 3.
\end{equation}
Thus, it follows that $\mathcal{Z} \left(\mathcal{D}^{-1}_{\sqrt{2}} h_2 \right)(1/2,0) > 0$, which proves the existence of a zero $(x_0,0)$ of $\mathcal{Z}\left(\mathcal{D}^{-1}_{\sqrt{2}} h_2\right)(x,0)$ with $x_0 \in (0, 1/2)$. By symmetry, another zero $(1-x_0,0)$ exists. Thus, combined with the results from \cite{BooZakZuc83} or \cite{Lem16}, we know that $\mathcal{D}_{\sqrt{2}}^{-1} h_2$ has at least five distinct zeros in $[0,1)^2$. This guarantees the existence of a Gabor system with $\mathcal{D}_{\sqrt{2}}^{-1} h_2$ and a periodic structure of density 5, which is not a frame. We leave the exact location of the new zeros open. Numerically, they are approximately at $x_0=0.171$ and $\widetilde{x}_0=0.829$.

Lastly, we note that $\mathcal{Z}_{\sqrt{3}} h_3(z)$ (Figure~\ref{fig:zeros3}) possesses 2 additional zeros for $\omega = \frac{1}{2 \sqrt{3}}$ at approximately $x_0 = 0.256 \cdot \sqrt{3}$ and $\widetilde{x}_0 = 1 - x_0 = 0.744 \cdot \sqrt{3}$, as indicated in Figure~\ref{fig:Zak_h3}.
\begin{figure}[ht]
    \centering
    \includegraphics[width=0.6\linewidth]{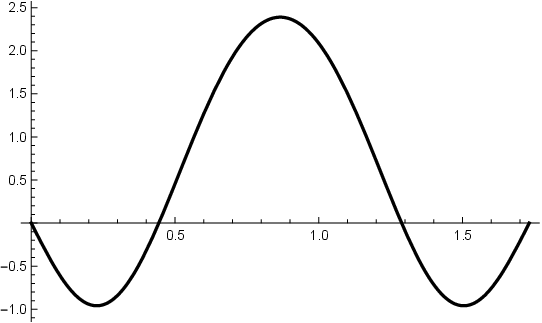}
    \caption{The dilated Zak transform $\mathcal{Z}_{\sqrt{3}} h_3 \left(x, \tfrac{1}{2 \sqrt{3}} \right)$ on $[0,\sqrt{3}]$. For $\ell \in \Z$, $\mathcal{Z}_{\sqrt{3}} h_3 \left(x+\ell, \tfrac{1}{2 \sqrt{3}} \right) = (-1)^\ell \mathcal{Z}_{\sqrt{3}} h_3 \left(x+\ell, \tfrac{1}{2 \sqrt{3}} \right)$ by the quasi-periodicity.}
    \label{fig:Zak_h3}
\end{figure}

We estimated the location using Mathematica \cite{Mathematica}. The proof of their existence is similar to that just above. Thus, there exists a periodic structure $\Gamma = \bigcup_{k=1}^7\left(D_{\sqrt{3}} \Z^2 + z_k\right)$ which has density 7 such that the Gabor system $\G(h_3,\Gamma)$ is not a frame.


\section*{Appendix}\label{appendix}
    We use the following identities. For $|q| < 1$, we have
    \begin{align}
        f_N(q) = \sum_{n \geq N} q^n & = \frac{q^N}{1-q},\\
        q f_N'(q) = \sum_{n \geq N} n q^n & = \frac{N q^N}{1-q} + \frac{q^{N+1}}{(1-q)^2},\\
        q^2 f_N''(q) = \sum_{n \geq N} n^2 q^n
        & = \frac{N^2  q^N}{1-q} + \frac{N q^{N+1}}{(1-q)^2} + \frac{(N+1) q^{N+1}}{(1-q)^2} + 2 \frac{q^{N+2}}{(1-q)^3}\\
        & = q^N \frac{N^2(1-q)^2 +(2N+1)q(1-q)+2q^2}{(1-q)^3}\\
        & = q^N \frac{\left(N+(1-N)q\right)^2+q}{(1-q)^3}.
    \end{align}
    Picking specifically $N=2$ and $q = e^{-\pi}$, we obtain
    \begin{align}
        \sum_{n \geq 2} e^{-\pi n} & = \frac{e^{-2\pi}}{1-e^{-\pi}} \approx 0.00195179,\\
        \sum_{n \geq 2} n^2 e^{-\pi n} & = \frac{e^{-2 \pi } \left(\left(2-e^{-\pi}\right)^2 + e^{-\pi}\right)}{\left(1-e^{-\pi}\right)^3} \approx 0.0082588.
    \end{align}
    Together, this leads to the bound $2\sum_{n\geq 2}(1+2\pi n^2)e^{-\pi n} < 0.11$.



\end{document}